\let\oldmarginpar\marginpar
\renewcommand\marginpar[1]{\-\oldmarginpar[\raggedleft\footnotesize #1]%
{\raggedright\footnotesize #1}}
\newtheorem{theorem}{Theorem}
\newtheorem{lemma}{Lemma}
\theoremstyle{definition}
\theoremstyle{remark}
\newtheorem{remark}{Remark}
\newcommand{\dif}{\mathrm{d}}
\newcommand{\e}{\mathrm{e}}
\newcommand{\sinc}{\mathrm{sinc} }
\newcommand{\Z}{\mathbb{Z}}
\newcommand{\abs}[1]{|#1|}
\newcommand{\Abs}[1]{\left|#1\right|}
\newcommand{\N}{\mathbb{N}}
\newcommand{\R}{\mathbb{R}}
\newcommand{\C}{\mathbb{C}}
\def \Z {\mathbb{Z}}
\def \C {\mathbb{C}}
\def \N {\mathbb{N}}
\def \R {\mathbb{R}}
\def \P {\mathbb{P}}
\def \E {\mathbb{E}}
\renewcommand{\Im}{\operatorname{Im}}
\begin{document}

\title[Gap probabilities]{Gap probabilities for  the cardinal sine}

\author{Jorge Antezana, Jeremiah Buckley, Jordi Marzo \\and Jan-Fredrik Olsen}

\address{Universidad Nacional de La Plata, Departamento de Matem\'atica, Esq. 50 y 115 s/n, Facultad de Ciencias La Plata (1900), Buenos Aires, Argentina}
\email{antezana@mate.unlp.edu.ar}

\address{Departament de matem\`atica aplicada i an\`alisi, Universitat de Barcelona, Gran Via 585, 08007, Barcelona, Spain}
\email{jerry.buckley07@gmail.com}

\address{Departament de matem\`atica aplicada i an\`alisi, Universitat de Barcelona, Gran Via 585, 08007, Barcelona, Spain}
\email{jmarzo@ub.edu}

\address{Centre for Mathematical Sciences, Lund University, P.O. Box 118, SE-221 00 Lund, Sweden}
\email{janfreol@maths.lth.se}

\keywords{Gaussian analytic functions, Paley-Wiener, Gap probabilities}

\section*{PRELIMINARY VERSION}

\date{\today}
\begin{abstract}
We study the zero set of random analytic functions generated by  a sum of the cardinal sine functions that form an orthogonal basis for the Paley-Wiener space. As a model case, we consider  real-valued  Gaussian coefficients. It is shown that the  asymptotic probability that there is no zero  in a bounded interval decays exponentially as a function of the length.
\end{abstract}

\maketitle


\section{Introduction}
A simple point process in $\R$ is a random integer-valued positive Radon measure on $\R$   that almost surely assigns at most measure 1 to singletons.
Simple point processes can be identified with random discrete subsets of $\R.$ In this paper, we   study `gap probabilities' of the simple point
process in $\R$ given by the zeros of the random function
\begin{equation*}
	 f(z)=\sum_{n\in\Z}a_n \frac{\sin \pi (z-n)}{\pi (z-n)},
\end{equation*}
where $a_n$ are i.i.d.\ random variables with zero mean and unit variance. Kolmogorov's inequality shows that this sum is almost surely pointwise
convergent. In fact, since
\begin{equation*}
	 \sum_{n\in\Z}\left|\frac{\sin \pi (z-n)}{\pi (z-n)}\right|^2
\end{equation*}
converges uniformly on compact subsets of the plane, this series almost surely defines an entire function. If we take $a_n$ to be Gaussian random
variables then $f$ is a Gaussian analytic function (GAF) (see \cite[Lemma 2.2.3]{HKPV} for details). 

We shall be chiefly concerned with the functions
given by taking $a_n$ to be real Gaussian random variables. We denote by $n_f$ the counting measure on the set of zeros of $f$. These functions are an
example of a stationary symmetric GAF and Feldheim   \cite{F} has shown that the density of zeros is given by
\begin{equation}\label{density}
	\E[n_f(z)]=S(y)m(x,y)+\frac{1}{2\sqrt{3}} \mu(x),
\end{equation}
where $z=x+iy$, $m$ denotes the planar Lebesgue measure, $\mu$ is the singular measure with respect to $m$ supported on $\R$ and identical to Lebesgue measure there, and
\begin{equation*}
	 S\Big(\frac{y}{2\pi}\Big)=\pi\left|\frac{d}{dy}\left(\frac{\cosh y-\frac{\sinh y}{y}}{\sqrt{\sinh^2y-y^2}}\right)\right|.
\end{equation*}
(Here $S$ is defined only for $y\neq0$, in fact the atom appearing in \eqref{density} is the distributional derivative at $0$.) We observe that since
$S(y)=O(y)$ as $y$ approaches zero there are almost surely zeros on the real line, but that they are   sparse close by. Moreover the zero set is on
average uniformly distributed on the real line. We are interested in the `gap probability', that is the probability that there are no zeros in a large
interval on the line. Our result is the following asymptotic estimate.
\begin{theorem}                                                        \label{gap_paley_wiener}
      Let $f$ be the symmetric GAF given by the almost surely convergent series
$$\sum_{n \in \mathbb{Z}} a_n \frac{\sin \pi (z-n)}{\pi (z-n)},$$
    where $a_n$ are i.i.d.\ real Gaussian variables with mean $0$ and variance $1.$
    Then, there exist constants $c,C>0$ such that for all $r\ge 1$,
$$e^{-c r}\le \mathbb{P}(\# (Z(f)\cap (-r,r))=0)\le e^{-C r}.$$
\end{theorem}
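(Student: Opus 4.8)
The proof splits into an easy upper bound and a substantial lower bound. By the intermediate value theorem the event $\{\#(Z(f)\cap(-r,r))=0\}$ is exactly the event that the continuous real function $f$ keeps a constant sign on $(-r,r)$, and by the symmetry $f\mapsto-f$ this probability equals $2\,\mathbb{P}(f>0 \text{ on } (-r,r))$. Thus the whole statement is a two-sided \emph{persistence} estimate for the stationary Gaussian process $f|_{\mathbb{R}}$, whose covariance is the Paley--Wiener reproducing kernel $\mathbb{E}[f(s)f(t)]=\operatorname{sinc}(\pi(s-t))$. For the upper bound I would exploit the sampling identity $f(n)=a_n$ for every $n\in\mathbb{Z}$, valid because $\operatorname{sinc}(\pi(n-m))=\delta_{nm}$. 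If $f$ has no zero in $(-r,r)$ then it does not change sign there, so in particular the i.i.d.\ Gaussians $a_n$ with $n\in(-r,r)\cap\mathbb{Z}$ all share the same sign; there are at least $2r-1$ such integers, and since $\mathbb{P}(a_n>0)=\tfrac12$ independently, the probability of a common sign is at most $2\cdot 2^{-(2r-1)}$, which gives $\mathbb{P}\le e^{-Cr}$.

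The lower bound is the heart of the matter, and the only viable route is to prove a super-multiplicativity property of the persistence probabilities $p(T)=\mathbb{P}(f>0\text{ on }[0,T])$, namely $p(S+T)\ge c\,p(S)\,p(T)$ for a constant $c>0$ independent of $S,T$; a standard dyadic iteration then yields $p(T)\ge e^{-cT}$ (using $p(1)>0$), which is the theorem. The key structural input is that although $\operatorname{sinc}$ decays only like $1/t$, it is \emph{square-summable}: $\sum_{n}\operatorname{sinc}^2(\pi(x-n))<\infty$, so the total interaction energy between two well-separated blocks is finite. I would record two clarifying facts. First, on each unit block $(k,k+1)$ one has $f(x)=\tfrac{\sin\pi u}{\pi}(-1)^k\sum_n\tfrac{(-1)^na_n}{x-n}$ with $u=x-k$, so positivity is governed by a Stieltjes-type sum whose two singular terms come precisely from the bounding values $a_k,a_{k+1}=f(k),f(k+1)$, while the remaining terms interact through the square-summable weights $1/(x-n)^2$. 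Second, and crucially, the fluctuations of $f$ over an interval of length $2r$ are of order $\sqrt{\log r}$ (the process is analytic with correlation length $O(1)$), so \emph{no} argument that pins $f$ to a fixed positive profile can produce the sharp exponential rate; the process must be allowed to make logarithmic excursions while still persisting, which is exactly what gluing two persistence events accomplishes and what makes super-multiplicativity the right tool rather than a Cameron--Martin shift.

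The decoupling $p(S+T)\ge c\,p(S)p(T)$ is the main obstacle. To establish it I would separate the two blocks by a buffer of length $\asymp\log T$ and bound the cross-influence by the interaction energy between the blocks, which is small precisely because $\operatorname{sinc}$ is square-summable; the events $\{f>0\}$ on the two blocks are then compared to independent ones by a Gaussian conditioning argument, conditioning on the coefficients near the interface and estimating the resulting small mean-shift on each side. A convenient way to import positive association is the spectral splitting $f=g+h$ into \emph{independent} stationary Gaussian processes, where $g$ carries the triangular (Fej\'er) density $\tfrac12(1-|\lambda|/\pi)_+$ on $[-\pi,\pi]$ and hence has a \emph{nonnegative} covariance (the Fej\'er kernel), so that Pitt's theorem supplies the FKG inequality for $g$, while $h$ carries the density $|\lambda|/2\pi$ and the negative part of the correlations and must be controlled separately. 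I expect the delicate point to be combining the buffer estimate, the conditioning step, and the control of the negatively correlated remainder $h$ into a single constant $c>0$ uniform in $S,T$; the same square-summability that lets the far field $\sum_{|n|>r+\Delta}a_n\operatorname{sinc}(\pi(\cdot-n))$ be made uniformly $O(1/\sqrt\Delta)$ on $(-r,r)$ (via a Borell--TIS bound, absorbing a buffer $\Delta\asymp\log r$ into the constant) is what ultimately makes all these error terms summable.
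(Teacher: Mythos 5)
Your upper bound is exactly the paper's: sample at the integers, use $f(n)=a_n$, and bound the probability that the $\ge 2r-1$ i.i.d.\ signs agree. That part is fine. The genuine gap is the lower bound, which in your proposal is a program rather than a proof: the decoupling inequality $p(S+T)\ge c\,p(S)\,p(T)$ \emph{is} the hard statement, and you never establish it. The FKG route is genuinely obstructed here: Pitt's theorem applies only to your Fej\'er component $g$, while the remainder $h$, with spectral density proportional to $|\lambda|$ on $[-\pi,\pi]$, has covariance $\sim \sin(\pi t)/t$ to leading order --- sign-changing and decaying at the \emph{same} $1/t$ rate as the sinc kernel itself. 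Saying $h$ ``must be controlled separately'' therefore defers precisely the difficulty that makes sign-changing kernels hard, and you concede as much (``I expect the delicate point to be combining\dots''). As written, the left-hand inequality of the theorem is unproven.

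Moreover, the heuristic by which you declared super-multiplicativity ``the only viable route'' --- that no pinning-to-a-profile argument can give the exponential rate because of $\sqrt{\log r}$ fluctuations --- is incorrect, and the paper's proof is exactly such a pinning argument. The theorem asks only for $e^{-cr}$ with \emph{some} constant $c$, not a rate matching the upper bound, and confinement probabilities of stationary Gaussian processes on $[-N,N]$ at fixed tolerance $\epsilon$ are themselves of order $e^{-c(\epsilon)N}$: typical logarithmic excursions cost nothing at this exponential scale. Concretely, the paper writes $f=f_0+f_1+f_2$, where $f_0(x)=\sum_{|n|\le 2N}\sinc(x-n)$ satisfies $f_0\ge 1-C/N$ on $[-N,N]$ by a residue computation; $f_1=\sum_{|n|\le 2N}(a_n-1)\sinc(x-n)$ is pinned via the event $E=\{|b_n|\le\epsilon,\ |B_n|\le\epsilon,\ |n|\le 2N\}$ with $b_n=(-1)^n(a_n-1)$ and $B_n$ their partial sums --- the partial-sum constraint is the crucial twist, since after Abel summation it defeats the logarithmic accumulation from the $1/(x-n)$ tails that would ruin the naive event $\{|a_n-1|\le\epsilon\ \text{for all}\ |n|\le 2N\}$; and a direct volume estimate ($\mathrm{Vol}(V_N)\ge\epsilon^N$, integrating out one variable at a time) gives $\P(E)\ge e^{-cN}$, while the tail $f_2$ is $\le\epsilon$ on $[-N,N]$ with probability $\ge 1/2$ by summation by parts plus Chebyshev. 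Your own observations --- square-summability and the Abel-summation structure on unit blocks --- are exactly the right tools, but you aimed them at the much harder gluing problem instead of at this elementary pinned-profile construction.
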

\begin{remark}
If instead of considering intervals we consider the rectangle $D_r=(-r,r)\times(-a,a)$ for some fixed $a>0$, then we compute a similar exponential decay
for $\mathbb{P}(\# (Z(f)\cap D_r)=0)$
\end{remark} 
 	\begin{remark}
	Suppose that the $a_n$ are i.i.d.\ Rademacher distributed. I.e.,  each $a_n$ is equal to either $-1$ or $1$ with
	equal probability. Since $f(n)  = a_n$ for $n \in \N$, it follows that if not all $a_n$ for $\abs{n} \leq N$ are of equal
	sign, then by the mean value theorem, $f$ has to have a zero in $(-N,N)$. As is shown below, the remaining two
	choices of the $a_n$ for $\abs{n}\leq N$ each yield an $f$ without zeroes in $(-N,N)$. Clearly,
	this gives a probability of $2(1/2)^{2N}$ for $f$ to be without zeroes there.
 	\end{remark}
 \begin{remark}
	The Cauchy distribution is given by the density
	\begin{equation*}
		p(x) =  \frac{1}{\pi}  \frac{1}{x^2 + 1}.
	\end{equation*}
	Whereas the Rademacher distribution is in some sense a simplified Gaussian, the Cauchy distribution is very different: It has neither 
	an expectation, nor a standard deviation. If we suppose that the $a_n$ are i.i.d.\ Cauchy distributed,  it is not hard to see that with probability one the sum $\sum a_n/n$ diverges, whence   the related random function diverges everywhere.
 \end{remark}
 
The main motivation for our work comes from the `hole theorem' proved by Sodin and Tsirelson in \cite{ST} for point-processes uniformly
distributed in the plane. The authors consider the GAF defined by
\begin{equation*}
	 F(z)=\sum_{n=0}^\infty a_n \frac{z^n}{\sqrt{n!}},
\end{equation*}
where $a_n$ are i.i.d.\ standard complex normal variables. For this function, the density of zeros is proportional to the planar Lebesgue measure and the
authors compute the asymptotic probability that there are no zeros in a disc of radius $r$ to be $e^{-cr^4}$, where $c>0$. The analogy with the GAF we
consider becomes clear if we note that $(\frac{z^n}{\sqrt{n!}})_{n=0}^\infty$ constitutes an orthonormal basis for the Bargmann-Fock space  
\begin{equation*}
	 \mathcal{F} =\{f\in H(\mathbb{C}):\|f\|_{\mathcal{F}}^2=\int_\C|f(z)|^2 e^{-2|z|^2}\frac{dm(z)}{\pi}<+\infty\},
\end{equation*}
where $m$ is the planar Lebesgue measure. We are replacing these functions with the sinc functions, which constitute a basis for the Paley-Wiener space.
An important caveat is that, though $f$ is constructed from an orthonormal basis, $f$ is almost surely not in the Paley-Wiener space, since the sequence
of coefficients $a_n$ is almost surely not in $\ell^2(\Z)$. It is not hard to see however that $f$ belongs almost surely to the Cartwright class.



\section{Proof of Theorem \ref{gap_paley_wiener}}


\subsection{Upper bound}

  We want to compute the probability of an event that contains the event of not having any zeroes on $(-N,N),$
  for $N\in \N.$
  One such event is that the values $f(n)$ have the same sign for $\abs{n}\leq N$. The probability of this event is
\begin{equation*}
    \P\Big( a_n > 0 \; \text{for} \; \abs{n} \leq N \quad \text{or} \quad a_n < 0 \; \text{for} \abs{n} \leq N\Big)
    = 2 (1/2)^{2N+1} = \e^{-CN},
\end{equation*}
    for some constant $C>0.$

\begin{remark}
  The same upper bound holds when $a_n$ are i.i.d.\ random variables with $0<\P(a_n>0)<1$ for which the random function
$\sum_{n \in \mathbb{Z}} a_n \sinc (x-n)$ converges.
\end{remark}


\subsection{Lower bound}


To compute the lower hole probability, we use the following scheme. First, we introduce the deterministic function
\begin{equation*}
     f_0(x) = \sum_{n = -2N}^{2N} \sinc (x-n).
\end{equation*}
We show in Lemma \ref{lemma_non_zero_guy} that it has no zeroes on $(-N,N)$, and we find an explicit lower bound on $(-N,N)$ for it. This lower bound does not depend on $N$. Second, we consider the functions
\begin{equation*}
	f_1(x)  =  \sum_{n = -2N}^{2N} (a_n - 1) \sinc (x-n) \qquad \text{and} \qquad f_2(x) = \sum_{\abs{n} > 2N} a_n  \sinc (x-n),
\end{equation*}
which induce the splitting
\begin{equation*}
    f = f_0 + f_1  +  f_2.
\end{equation*}
We show that for all $x\in [-N,N]$ we have $|f_1 (x)|\le \epsilon$ with probability
at least $\e^{-cN}$ for large $N$ and some constant $c>0.$ Moreover, we show that
$$\P\left(\sup_{x\in [-N,N]}|f_2(x)|\le \epsilon\right)$$ is larger than, say, $1/2$ for big enough $N$. As the events on $f_1$ and $f_2$ are clearly independent, the lower bound now follows by choosing $\epsilon$ small enough.



We turn to the first part of the proof.
\begin{lemma}                                               \label{lemma_non_zero_guy}
  Given $N\in \N$ and
\begin{equation}                                            \label{non_zero_guy}
f_0(x)=\sum_{n = -2N}^{2N} \sinc (x-n)=\sin \pi x \sum_{n = -2N}^{2N} \frac{(-1)^n}{\pi(x-n)}.
\end{equation}
  Then, there exists a constant $C>0$ such that, for $N$ big enough,
$$1-\frac{C}{N}\le \inf_{|x|\le N} f_0(x)\le \sup_{|x|\le N} f_0(x)\le 1+\frac{C}{N}.$$
\end{lemma}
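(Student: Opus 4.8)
The plan is to compare $f_0$ with the full cardinal series over all of $\Z$, which collapses to the constant function $1$. Starting from the classical Mittag--Leffler expansion
\begin{equation*}
	\frac{\pi}{\sin \pi x}=\sum_{n\in\Z}\frac{(-1)^n}{x-n},
\end{equation*}
and multiplying through by $\tfrac{\sin \pi x}{\pi}$, one obtains the partition-of-unity identity $\sum_{n\in\Z}\sinc(x-n)=1$ for every $x\in\R$ (the integer points being covered directly, since there exactly one term equals $1$ and the rest vanish). Subtracting from this the finite sum that defines $f_0$, I would write
\begin{equation*}
	f_0(x)=1-R_N(x),\qquad R_N(x)=\sin\pi x\sum_{\abs{n}>2N}\frac{(-1)^n}{\pi(x-n)},
\end{equation*}
so that the whole lemma reduces to the single uniform estimate $\sup_{\abs{x}\le N}\abs{R_N(x)}\le C/N$.

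The next and essential step is to bound this tail, and the main obstacle is precisely that it cannot be done term by term: for $\abs{x}\le N$ and $\abs{n}>2N$ one has $\abs{x-n}\ge\abs{n}-N$, and $\sum_{\abs{n}>2N}(\abs{n}-N)^{-1}$ diverges, so the decay must be harvested entirely from the alternating factor $(-1)^n$. I would extract it by grouping consecutive terms (equivalently, a summation by parts): pairing $n$ with $n+1$ gives
\begin{equation*}
	\frac{(-1)^n}{x-n}+\frac{(-1)^{n+1}}{x-n-1}=\frac{(-1)^{n+1}}{(x-n)(x-n-1)},
\end{equation*}
whose modulus is at most $\abs{x-n}^{-1}\abs{x-n-1}^{-1}$. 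Treating the tails $n>2N$ and $n<-2N$ separately and using $\abs{x-n}\ge\abs{n}-N$, each paired term is $\bigoh\big((\abs{n}-N)^{-2}\big)$, and since
\begin{equation*}
	\sum_{m>N}\frac{1}{m^2}=\bigoh\Big(\frac1N\Big),
\end{equation*}
while $\abs{\sin\pi x}\le1$, this yields $\abs{R_N(x)}\le C/N$ uniformly on $\abs{x}\le N$. That is exactly the claimed two-sided bound, and in particular it forces $f_0>0$ on $(-N,N)$ once $N>C$.

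The only technical care needed is routine bookkeeping at the ends of the two tails: one must track the parity of the first index in each pair, and absorb a possible unpaired leftover term, which is itself of size $\bigoh(1/N)$; one should also note that the alternating series converges conditionally, so that the grouping does not alter its value. The single nonroutine idea is the recognition that $f_0$ is the truncation of a telescoping partition of unity whose remainder is controlled solely through sign cancellation rather than through absolute values.
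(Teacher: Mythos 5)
Your proof is correct, but it takes a genuinely different route from the paper's. The paper obtains the identity and the error bound in one stroke from the residue theorem: integrating $\bigl((z-x)\sin \pi z\bigr)^{-1}$ over a rectangle of width and height comparable to $4N$ centred at $x=1/2$ produces the finite alternating sum plus the term $1/\sin \pi x$, so that $f_0(x)=1+\frac{\sin\pi x}{2\pi i}\oint_R \frac{dz}{(x-z)\sin\pi z}$, and the $C/N$ bound is read off from the contour integral (the integrand is of size $1/N$ on the vertical sides and exponentially small on the horizontal ones). You instead quote the classical cosecant expansion --- equivalently the partition of unity $\sum_{n\in\Z}\sinc(x-n)=1$ --- and estimate the discarded tail $R_N$ by pairing consecutive terms, which converts the conditionally convergent alternating tail into an absolutely convergent sum of size $O(1/N)$. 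Your points of care are exactly the right ones: the term-by-term bound genuinely diverges, and grouping a convergent series into pairs is legitimate since the grouped partial sums form a subsequence of the original ones; the pairing identity and the bound $\abs{x-n}\ge \abs{n}-N$ check out, and in fact the two infinite tails pair off with no leftover term at all. As for what each approach buys: yours is more elementary (real-variable, modulo citing the Mittag--Leffler expansion, which the paper's contour argument essentially reproves in passing), and it uses the same summation-by-parts mechanism the paper deploys later for $f_1$ and $f_2$, so it is stylistically of a piece with the rest of the proof. The paper's contour argument is self-contained and transfers verbatim to complex $z$, giving the uniform bound on a strip $[-N,N]\times[-C,C]$ that the remark after the lemma relies on; your estimate extends there too, but only after inserting the extra factor $e^{\pi\abs{\Im z}}$ coming from $\abs{\sin \pi z}$, with the constant then depending on the height of the strip.
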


\begin{proof}
Let $R = R(N)$ be the rectangle of   length $4N + 1$ and height $4N$, centered at $x=1/2$. By the residue theorem, it holds that
\begin{equation*}
    \frac{1}{2\pi  i} \oint_{R} \frac{d z}{(z - x) \sin \pi z} = \frac{1}{\pi} \sum_{n=-2N+1}^{N}
\frac{(-1)^n}{n - x} + \frac{1}{\sin \pi x}.
\end{equation*}
Observe that if we shift around the terms,   this yields
\begin{equation*}
    \sin \pi x \sum_{n=-2N+1}^{2N}  \frac{(-1)^n }{x-n} =  1  +     \frac{\sin\pi x }{2\pi i}
\oint_{R} \frac{d z}{(x - z ) \sin \pi z}.
\end{equation*}
    Now, given $-N\le x \le N,$ it is easy to bound this last integral by $C/N.$
\end{proof}


\begin{remark} The same bound holds for all points $z$ in a strip with fixed height $[-N,N]\times [-C,C]$ for some $C>0.$
We observe though, that the function $f_0(z)$ in (\ref{non_zero_guy}) is close to zero around $\Im z = \log N$. Indeed,
it is smaller than $\e^{-cN}$ there, for some $c>0$ independent of $N$.
\end{remark}


\subsubsection{The middle terms}
Let $\epsilon>0$ be given, and consider $N \in \N$ to be fixed. We look at  the function
\begin{equation*}
    f_1(x) = \sum_{n=-2N}^{2N} (a_n-1) \sinc(x-n) = \frac{\sin \pi x  }{\pi} \sum_{n=-2N}^{2N} (a_n-1) \frac{(-1)^n}{x-n}.
\end{equation*}
To simplify the expression, we set $b_n = (a_n-1) (-1)^n.$
  We want to compute  a lower bound for the probability that,   for   $x\in [-N,N]$,
$$|f_1(x)|\le \epsilon.$$
 Since, for $|n|\le N$, we have $f_1(n)=a_n-1$,   the condition
\begin{equation*}
	 |a_n-1|\le \epsilon \quad \text{for} \quad |n|\le N 
\end{equation*}
is necessary.

  Define $B_n=b_{-2N}+\dots b_n$ for $|n|\le 2N$ with $B_{-2N-1}=0$, and suppose that $x \notin \Z$. With this,  summation by parts yields
\begin{equation}                                \label{summation}
\sum_{-2N}^{2N}\frac{b_n}{x-n}=-\sum_{-2N}^{2N}  \frac{B_n}{(x-n)(x-n-1)}+\frac{B_{2N}}{x-2N-1}.
\end{equation}
We now claim that under the event
\begin{equation}                            \label{the event}
	E = \Big\{ \abs{b_n} \leq \epsilon, \; \abs{B_n} \leq \epsilon  \quad \text{for} \quad \abs{n} \leq 2N \Big\},
\end{equation}
  we have $|f(x)-f_0(x)|\le \epsilon$ for $|x|\le N,$ with a bound
  independent of $N$. Indeed,
  the second summand at the right hand side of (\ref{summation}) converges almost surely to zero, because
$$\left| \frac{B_{2N}}{x-2N-1} \right| \le \frac{\epsilon}{N}.$$
  Suppose that $x\in (k,k+1)$ and split the first sum in (\ref{summation}) as
$$\sum_{ \substack{ n=-2N \\ n\neq k-1,k,k+1} }^{2N}  \frac{B_n}{(x-n)(x-n-1)}+
\sum_{n=k-1}^{k+1}\frac{B_n}{(x-n)(x-n-1)}.$$
  Then
$$\left| \sum_{
\substack{ n=-2N \\ n\neq k-1,k,k+1}
}^{2N} \frac{B_n}{(x-n)(x-n-1)} \right|\le  \sum_{n\ge k+2}\frac{\epsilon}{(k+1-n)^2}+ \sum_{n\le k-2}\frac{\epsilon}{(k-n)^2}\lesssim \epsilon.$$
  For the remaining terms, the function $\sin \pi x$ comes into play. E.g., suppose that $|x-k|\le 1/2$, then
$$\left| \sin \pi x \frac{B_k}{(x-k)(x-k-1)}\right| \lesssim \frac{\epsilon}{|x-k-1|} \left| \frac{\sin \pi (x-k)}{\pi(x-k)}\right| \lesssim \epsilon.$$
  The remaining terms are treated in exactly the same way.

  What remains is to compute the probability of the event $E$ defined by  (\ref{the event}). We recall that the $b_n$ were all defined in terms of
the real and independent Gaussian variables $a_n$. So the event $E$ above defines a set
$$V=\left\{ (t_{-2N},\dots t_{2N})\in \R^{4N+1} : |t_n|\le \epsilon,\; |\sum_{-2N}^n t_n|\le \epsilon, \; |n|\le 2N \right\}$$
in terms of the values of the $a_n$. Hence,
\begin{equation*}
    \P(E) = c^{2N} \int \cdots \int_V \e^{-(t_{-2N}^2 + \cdots t_{2N}^2)/2} \dif t_{-2N}  \cdots \dif t_{2N}.
\end{equation*}
Here, $c$ is the normalising constant of the one dimensional Gaussian. Since $\abs{a_n - 1} < \epsilon$, it follows that
\begin{equation*}
    \P(E) \geq c^{2N} \e^{-N(1 + \epsilon)^2}  \int \cdots \int_V  \dif t_{-2N}  \cdots \dif t_{2N}=c^{2N} \e^{-N(1 + \epsilon)^2} \mbox{Vol}(V).
\end{equation*}
We now seek a lower bound for this euclidean $(4N+1)$-volume.

To simplify notation, we pose this problem as follows. For real variables $x_1, \ldots, x_N$,
we wish to compute the euclidean volume of the solid $V_N$ defined by $\abs{x_i} \leq \epsilon$ for $i=1,\dots , N$ and
\begin{align*}
    &\abs{x_1 + x_2} \leq \epsilon, \\
    &\abs{x_1 + x_2 + x_3} \leq \epsilon \\
    & \qquad \qquad \vdots \\
    &\abs{x_1 + x_2 + \cdots + x_N} \leq \epsilon.
\end{align*}
One way to do this is as follows. Write $y_N = x_1 + \cdots + x_{N-1}$, then
\begin{equation*}
    \text{Vol}(V_N) = \int \cdots \int_{V_{N-1}}
\left(\int_{\text{max}\{-\epsilon, - \epsilon - y_N \}}^{\text{min}\{\epsilon,  \epsilon - y_N \}} \dif x_N \right) \dif x_1 \cdots \dif x_{N-1}.
\end{equation*}
This is illustrated in Figure \ref{figur}.
\begin{figure}
       \includegraphics{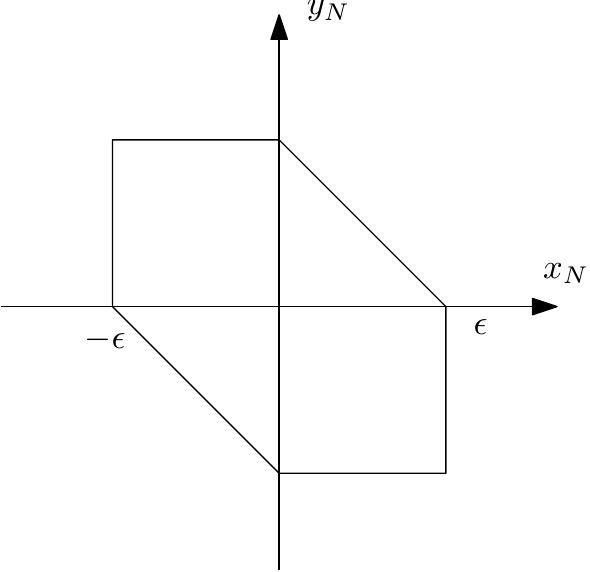}
       \caption{Illustration of the solid $V_N$.\label{figur}}
\end{figure}
Clearly, whenever $y_N < 0$, the upper limit is $\epsilon$, and whenever $y_N >0$, the lower limit is $-\epsilon$. Hence,
\begin{align*}
    \text{Vol}(V_N)  &\geq
    \int \cdots \int_{V_{N-1} \cap \{ y_N < 0 \}} \left( \int_0^\epsilon \dif x_N \right) \dif x_1 \cdots \dif x_{N-1}
    \\ & \qquad \qquad +
    \int \cdots \int_{V_{N-1} \cap \{ y_N > 0 \}} \left( \int_{-\epsilon}^0 \dif x_N \right) \dif x_1 \cdots \dif x_{N-1}
    =\epsilon \text{Vol}(V_{N-1}).
\end{align*}
Iterating  this, we get
\begin{equation*}
    \text{Vol}(V_N) \geq \epsilon^N.
\end{equation*}
In conclusion,
\begin{equation*}
    \P(E) \geq \e^{-c N},
\end{equation*}
  which concludes this part of the proof.


\subsubsection{The tail}

   We now turn to the tail term
\begin{equation*}
    f_2(x) = \sum_{\abs{n} > 2N} a_n \sinc(x-n).
\end{equation*}
Clearly, we need only consider the terms for which $n$ is positive. Set $c_n = (-1)^n a_n$. We factor out the sine factor as above, and
apply summation by parts, to get
\begin{equation} \label{the expression}
     \sum_{n > 2N}^L \frac{c_n}{x-n} = \sum_{2N+1}^L C_n \frac{-1}{(x-n) (x- n-1)}
     + \frac{C_L}{x -L-1}.
\end{equation}
where
\begin{equation*}
     C_n = c_{2N+1} + \cdots +c_n,\;\; C_{2N}=0.
\end{equation*}
We want to take the limit as $L \rightarrow \infty$. It is easy to see that the last term almost surely tends to zero. Indeed, $C_L$ is a sum of
independent normal variables with mean $0$ and variance $1$, and therefore is itself normal with mean $0$ and variance $L-2N.$
Moreover, since
\begin{equation*}
    \Abs{\frac{C_L}{x-L-1}} \lesssim \Abs{\frac{C_L}{N-L}},
\end{equation*}
and the random variable inside of the absolute values on the right-hand side has variance $L-2N$, it follows
that limit is almost surely equal to $0$, whence we are allowed to let $L\rightarrow \infty$ in \eqref{the expression}.

We  prove the following. With a positive probability, we have
\begin{equation*}
    \Abs{ \sum_{2N+1}^L C_n \frac{1}{(x-n) (x- n-1)} } \leq \epsilon.
\end{equation*}
As $n^2\simeq |(x-n)(x-n-1)|$ for $|x|\le N$ and $n>2N$, it is enough to consider the expression
\begin{equation*} 
    \sum_{2N+1}^L \frac{ \abs{C_n}}{n^2}.
\end{equation*}
The absolute value of a Gaussian random variable has the folded-normal distribution. In particular, if $X \sim N(0,\sigma^2)$, then
\begin{equation*}
    \E(\abs{X})  = \sigma \sqrt{ \frac{2}{\pi}}.
\end{equation*}
Since, in our case, $\sigma^2 = n-2N$, this yields
\begin{equation*}
    \E\left( \sum_{2N+1}^L \frac{ \abs{C_n}}{n^2} \right)
    \lesssim \sum_{2N+1}^L  \frac{\sqrt{n-2N}}{n^2} \lesssim \sum_{1}^\infty  \frac{1}{(n+2N)^{3/2}}\lesssim \frac{1}{\sqrt{N}}.
\end{equation*}
Finally, by Chebyshev's inequality,
$$\P\left(\sum_{2N+1}^L \frac{ \abs{C_n}}{n^2} \leq \epsilon\right) \geq 1 - \frac{\E(Y)}{\epsilon} \geq 1 -
\frac{C}{\epsilon \sqrt{N}}.$$


 \section{Acknowledgements}
This work was done as part of the research program ``Complex Analysis and Spectral Problems'' 2010/2011 at the Centre de Recerca
Matem\`atica (CRM), Bellaterra, Barcelona.
We are grateful to Mikhail Sodin and Joaquim Ortega-Cerd\`a for enlightening discussions on the subject matter of this paper.



\begin{thebibliography}{MOC}

\bibitem[HKPV]{HKPV} J.B. Hough, M. Krishnapur, Y. Peres, B. Virag, Zeros of Gaussian Analytic Functions and Determinantal Point
Processes, University Lecture Series, 51. AMS 2009.
\bibitem[dB]{dB} L. de Branges, Hilbert Spaces of Entire Functions, Prentice Hall, 1968.
\bibitem[R]{R} W. Rudin, Principles of Mathematical Analysis, 3ed, McGraw-Hill, 1976.
\bibitem[F]{F} N. Feldheim, Zeroes of Gaussian Analytic Functions with Translation-Invariant Distribution, arXiv:1105.3929v3 [math.PR].
\bibitem[ST]{ST} M. Sodin, B. Tsirelson, Random complex zeroes. III. Decay of the hole probability. Israel J. Math. 147 (2005), 371--379.

\end{thebibliography}
\end{document}